\DeclareMathOperator{\Isom}{Isom}
\DeclareMathOperator{\Ax}{Ax}
\DeclareMathOperator{\ax}{ax}
\theoremstyle{bams}
\newtheorem{thm}{Theorem}[section]
\newtheorem{prop}[thm]{Proposition}
\newtheorem{cor}[thm]{Corollary}
\newtheorem{lem}[thm]{Lemma}
\theoremstyle{bamsdefn}
\newtheorem{rem}[thm]{Remark}
\begin{document}
\runningtitle{Commensurability of Kleinian groups}
\title{Limit sets and commensurability of Kleinian groups}
\cauthor 
\author[1]{Wen-yuan Yang}
\address[1]{College of Mathematics and Econometrics, Hunan
University, Changsha, Hunan 410082 People's Republic of China
\\U.F.R. de Mathematiques, Universite de Lille 1,59655 Villneuve
D'Ascq Cedex, France \email{wyang@math.univ-lille1.fr}}
\author[2]{Yue-ping Jiang}
\address[2]{College of Mathematics and Econometrics, Hunan
University, Changsha, Hunan 410082 People's Republic of China
\email{ypjiang731@163.com}}


\authorheadline{W-Y. YANG and Y-P. JIANG}


\support{The first author is supported by the China-funded
Postgraduates Studying Aboard Program for Building Top University.
The second author is supported by National Natural Science
Foundation of China (No. 10671059) and Doctorate Foundation of the
Ministry of Education of China (No. 20060532023).}

\begin{abstract}
In this paper, we obtain several results on the commensurability of
two Kleinian groups and their limit sets. We prove that two finitely
generated subgroups $G_1$ and $G_2$ of an infinite co-volume
Kleinian group $G \subset \Isom(\mathbf{H}^3)$ having $\Lambda(G_1) =
\Lambda(G_2)$ are commensurable. In particular, it is proved that
any finitely generated subgroup $H$ of a Kleinian group $G \subset
\Isom(\mathbf{H}^3)$ with $\Lambda(H) = \Lambda(G)$ is of finite
index if and only if $H$ is not a virtually fiber subgroup.
\end{abstract}

\classification{primary 30F40}
\keywords{Commensurability, Geometrically finite, Limit set,
Hyperbolic element}

\maketitle

\section{Introduction}\label{sec:1}
Two groups $G_1$ and $G_2$ are commensurable if their intersection
$G_1 \cap G_2$ is of finite index in both $G_1$ and $G_2$. In this
paper, we investigate the following question asked by J. Anderson
\cite{bm}: namely, if $G_1, G_2 \subset \Isom(\mathbf{H}^n)$ are
finitely generated and discrete, does $\Ax(G_1) = \Ax(G_2)$ imply that
$G_1$ and $G_2$ are commensurable? Here we use $\Ax(G)$ to denote the set
of axes of the hyperbolic elements of $G \subset
\Isom(\mathbf{H}^n)$.

The question has been discussed by several
authors. In 1990, G. Mess \cite{gm} showed that if $G_1$ and $G_2$
are non-elementary finitely generated Fuchsian groups having the
same nonempty set of simple axes, then $G_1$ and $G_2$ are
commensurable. Using some technical results on arithmetic
Kleinian groups, D. Long and A. Reid \cite{da} gave an affirmative
answer to this question in the case where $G_1$ and $G_2$ are arithmetic
Kleinian groups. Note that all the confirmed cases for the question
are geometrically finite groups. So it is natural to ask if the
question is true with the assumption that $G_1$ and $G_2$ are
geometrically finite. Recently, P. Susskind \cite{ps} constructed
two geometrically finite Kleinian groups in $\Isom(\mathbf{H}^n)$ (for $n
\geq 4$) having the same action on some invariant 2-hyperbolic plane
but whose intersection is infinitely generated. So this implies that
these two geometrically finite groups are not commensurable although
they have the same axes set. But it is worth pointing that these two
geometrically finite Kleinian groups generate a non-discrete group.
This example suggests that some additional conditions need to be
imposed to eliminate such `bad' groups.

In higher dimensions, we have the following consequence of P.
Susskind and G. Swarup's results \cite{ss} on the limit set of the
intersection of two geometrically finite Kleinian groups.

\begin{prop} \label{prop:geocomm}
Let $G_1$ and $G_2$ be two non-elementary geometrically finite subgroups
of a Kleinian group $G \subset \Isom(\mathbf{H}^n)$. Then $G_1$ and
$G_2$ are commensurable if and only if the limit sets $\Lambda(G_1)$ and
$\Lambda(G_2)$ are equal. In particular, $\Lambda(G_1) = \Lambda(G_2)$ if and only
if $\Ax(G_1) = \Ax(G_2) = \Ax(G_1 \cap G_2)$.
\end{prop}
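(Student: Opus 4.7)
The plan is to establish the first equivalence (commensurability $\iff$ equal limit sets) and then deduce the second one from it together with the density of hyperbolic fixed points in the limit set. The easy direction of the first equivalence is immediate: if $G_1$ and $G_2$ are commensurable, then $G_1 \cap G_2$ has finite index in each $G_i$, and since a finite-index subgroup of a non-elementary discrete subgroup of $\Isom(\mathbf{H}^n)$ shares its limit set, one obtains $\Lambda(G_1) = \Lambda(G_1 \cap G_2) = \Lambda(G_2)$.

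The substantive direction is to show that $\Lambda(G_1) = \Lambda(G_2)$ forces commensurability. Here I would invoke the theorem of Susskind--Swarup \cite{ss} to the effect that when $G_1, G_2$ are geometrically finite subgroups of a Kleinian group, the intersection $G_1 \cap G_2$ is again geometrically finite and satisfies $\Lambda(G_1 \cap G_2) = \Lambda(G_1) \cap \Lambda(G_2)$, up to a controlled correction at common parabolic fixed points. Under our hypothesis this yields $\Lambda(G_1 \cap G_2) = \Lambda(G_i)$ for $i=1,2$. I would then quote the general principle that a subgroup $H$ of a geometrically finite Kleinian group $K$ with $\Lambda(H) = \Lambda(K)$ must be of finite index in $K$ (via comparison of convex cores modulo cusps, or via critical-exponent/Patterson--Sullivan rigidity). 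Applied to $H = G_1 \cap G_2 \subset K = G_i$, this gives $[G_i : G_1 \cap G_2] < \infty$ for each $i$, establishing commensurability.

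For the second equivalence, if $\Lambda(G_1) = \Lambda(G_2)$ then $G_1$ and $G_2$ are commensurable by the above, and every hyperbolic element $g \in G_i$ has a nonzero power lying in $G_1 \cap G_2$ whose axis equals that of $g$; hence $\Ax(G_1) = \Ax(G_1 \cap G_2) = \Ax(G_2)$. Conversely, if the three axes sets coincide then so do their endpoint sets in $\partial \mathbf{H}^n$; since the endpoints of axes of hyperbolic elements are dense in the limit set of any non-elementary Kleinian group, taking closures gives $\Lambda(G_1) = \Lambda(G_2)$. The step I expect to be the main obstacle is the clean appeal to Susskind--Swarup: one has to verify that the parabolic-fixed-point correction does not spoil the identification $\Lambda(G_1 \cap G_2) = \Lambda(G_i)$, and that $G_1 \cap G_2$ is non-elementary, so that the subsequent ``same limit set $\Rightarrow$ finite index'' principle legitimately applies.
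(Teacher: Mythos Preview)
Your outline is correct and matches the paper's strategy: both directions of the first equivalence and the reduction to Susskind--Swarup's ``same limit set $\Rightarrow$ finite index'' result (\cite[Theorem~1]{ss}) are exactly what the paper does, and the second equivalence is handled the same way via density of hyperbolic fixed points.

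The one point you flag as an obstacle---showing that the parabolic correction set $P$ in $\Lambda(G_1)\cap\Lambda(G_2)=\Lambda(G_1\cap G_2)\cup P$ does not prevent $\Lambda(G_1\cap G_2)=\Lambda(G_i)$---is precisely where the paper's argument differs slightly in emphasis from yours. Rather than treating the limit-set equality directly, the paper first proves the axes statement (their Lemma~\ref{lem:geo}): since in a discrete group a hyperbolic element cannot share a fixed point with a parabolic one, every hyperbolic $h\in G_i$ has both fixed points outside $P$, hence in $\Lambda(G_1\cap G_2)$; then \cite[Corollary~1]{ss} (Proposition~\ref{prop:ss} here) gives $h^n\in G_1\cap G_2$, so $\Ax(G_1)=\Ax(G_2)=\Ax(G_1\cap G_2)$, and density of hyperbolic fixed points yields $\Lambda(G_1\cap G_2)=\Lambda(G_i)$. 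In other words, the paper uses the axes equivalence as the \emph{tool} to dispose of $P$ and then feeds the resulting limit-set equality into \cite[Theorem~1]{ss}; you propose to do the limit-set step first and deduce the axes statement afterward. Either order works (one can also kill $P$ directly by noting it is countable while $\Lambda(G_i)$ is perfect, so $\Lambda(G_1\cap G_2)\supset\Lambda(G_i)\setminus P$ is dense and closed), and this also shows $G_1\cap G_2$ is non-elementary, addressing your other concern.
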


In several cases, the condition that both subgroups lie in a larger
discrete group can be dropped.

\begin{cor} \label{cor:geocomm}
Let $G_1, G_2 \subset \Isom(\mathbf{H}^n)$ be two non-elementary
geometrically finite Kleinian groups of the second kind leaving no
$m$ hyperbolic planes invariant for $m < n-1$. Then $G_1$ and $G_2$
are commensurable if and only if $\Lambda(G_1) = \Lambda(G_2)$.
\end{cor}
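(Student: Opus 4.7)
The forward direction is routine: commensurable non-elementary Kleinian groups share the finite-index subgroup $G_1\cap G_2$, which has the same non-empty limit set as both, so $\Lambda(G_1)=\Lambda(G_2)$. For the converse, assume $\Lambda(G_1)=\Lambda(G_2)=:\Lambda$. The strategy is to reduce to Proposition~\ref{prop:geocomm} by producing a discrete ambient group containing both $G_1$ and $G_2$; the natural candidate is $G:=\langle G_1,G_2\rangle$. Since both $G_1$ and $G_2$ preserve $\Lambda$ setwise, $G$ lies in the set-stabilizer $\mathrm{Stab}(\Lambda)\subset\Isom(\mathbf{H}^n)$, so it suffices to show that $\mathrm{Stab}(\Lambda)$ is discrete.

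For the discreteness, I would argue by contradiction: assuming $\mathrm{Stab}(\Lambda)$ is non-discrete, its identity component $H$ (taken inside the closure of $\mathrm{Stab}(\Lambda)$ in the Lie group $\Isom(\mathbf{H}^n)$) is a non-trivial connected Lie subgroup that preserves $\Lambda$ and is normalized by each $G_i$. I would then invoke the classification of connected Lie subgroups of $\Isom^+(\mathbf{H}^n)\cong SO^+(n,1)$: up to conjugacy, any such $H$ either fixes a point of $\overline{\mathbf{H}^n}$, preserves a geodesic, preserves a proper totally geodesic subspace $\mathbf{H}^k\subsetneq\mathbf{H}^n$, or equals $\Isom^+(\mathbf{H}^n)$. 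The last alternative would force $\Lambda=\partial\mathbf{H}^n$ (as $\Isom^+(\mathbf{H}^n)$ acts transitively on the boundary), contradicting the second-kind assumption; the first two force the normalizer $G_1$ to preserve a canonical fixed point or geodesic of $H$, contradicting non-elementariness.

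The remaining and most delicate case is that $H$ preserves some proper $\mathbf{H}^k$. Then $\Lambda\subset\partial\mathbf{H}^k$, and the smallest hyperbolic subspace containing $\Lambda$ is canonically determined and hence preserved by $G_1$; the hypothesis that $G_1$ leaves no $\mathbf{H}^m$ invariant for $m<n-1$ then forces $k\geq n-1$. The borderline case $k=n-1$, where the hypothesis does not directly rule out an invariant hyperplane, is handled by descending the entire argument into $\Isom(\mathbf{H}^{n-1})$ and applying induction on $n$, with a separate treatment when $\Lambda=\partial\mathbf{H}^{n-1}$ (the subcase where $G_i$ becomes first-kind in the hyperplane, which must be dispatched using lattice rigidity rather than a further descent). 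This classification step, and specifically the $k=n-1$ subcase, is the principal technical obstacle; once it is cleared, $G$ is discrete, and Proposition~\ref{prop:geocomm} applied to $G_1,G_2\subset G$ with $\Lambda(G_1)=\Lambda(G_2)$ immediately yields commensurability.
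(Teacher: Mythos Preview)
Your high-level strategy is exactly the paper's: show that $G_1$ and $G_2$ sit inside a common Kleinian group, namely the stabilizer of $\Lambda$ in $\Isom(\mathbf{H}^n)$, and then invoke Proposition~\ref{prop:geocomm}. The only difference is that the paper does not attempt to prove discreteness of $\mathrm{Stab}(\Lambda)$ from scratch; it simply cites Greenberg's theorem, which asserts precisely this under the stated hypotheses on $G_1$.

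Your sketch for discreteness (pass to the identity component $H$ of $\mathrm{Stab}(\Lambda)$ and classify it as a connected subgroup of $\Isom(\mathbf{H}^n)$) is the right shape, and is close in spirit to how Greenberg-type results are proved. However, the case you single out as the principal obstacle is not merely delicate---it cannot be dispatched by the methods you propose. If $\Lambda$ is a round $(n-2)$-sphere $\partial\mathbf{H}^{n-1}$, then $\mathrm{Stab}(\Lambda)$ contains all of $\Isom(\mathbf{H}^{n-1})$ and is genuinely non-discrete; no induction helps, and ``lattice rigidity'' does not force commensurability either, since two incommensurable cocompact lattices in $\Isom(\mathbf{H}^{n-1})$ share that limit set while satisfying every other hypothesis. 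Greenberg's theorem is stated so that its hypotheses exclude exactly this configuration (no invariant proper totally geodesic subspace, equivalently $\Lambda$ not contained in any proper round subsphere), and that is what the paper is relying on. So the efficient route is to cite Greenberg; if you insist on rederiving discreteness, the hyperplane case has to be ruled out by hypothesis rather than handled by a further argument.
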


In dimension 3, we can refine the analysis of the limit sets
using Anderson's results \cite{aj2} to get the following result in the
essence of the recent solution of the Tameness Conjecture (see
\cite{ag} and \cite{cag}) which states that all finitely generated
Kleinian groups in $\Isom(\mathbf{H}^3)$ are topologically tame.

\begin{thm} \label{thm:tamecomm}
Let $G_1$, $G_2$ be two non-elementary finitely generated subgroups of
an infinite co-volume Kleinian group $G \subset \Isom(\mathbf{H}^3)$.
Then $G_1$ and $G_2$ are commensurable if and only if $\Lambda(G_1)
= \Lambda(G_2)$. In particular, $\Lambda(G_1) = \Lambda(G_2)$ if and
only if $\Ax(G_1) = \Ax(G_2) = \Ax(G_1 \cap G_2)$
\end{thm}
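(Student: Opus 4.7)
The direction $(\Rightarrow)$ is the trivial observation that $G_1\cap G_2$ has finite index in each $G_i$, so $\Lambda(G_1)=\Lambda(G_1\cap G_2)=\Lambda(G_2)$. The real content lies in $(\Leftarrow)$. So assume $\Lambda(G_1)=\Lambda(G_2)=:\Lambda$. My plan is to split according to the geometry of $G_1$ and $G_2$.

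\emph{Case 1: both $G_1$ and $G_2$ are geometrically finite.} Apply Proposition \ref{prop:geocomm} directly, since by hypothesis $G_1,G_2\subset G$.

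\emph{Case 2: at least one, say $G_1$, is geometrically infinite.} By the Tameness Theorem (\cite{ag}, \cite{cag}), $G_1$ is topologically tame, so every end of $\mathbf{H}^3/G_1$ is either geometrically finite or simply degenerate. Using Anderson's intersection analysis from \cite{aj2}, I would first promote the hypothesis to $\Lambda(G_1\cap G_2)=\Lambda$. I would then apply Canary's Covering Theorem to the covering $\mathbf{H}^3/G_1 \to \mathbf{H}^3/G$: each geometrically infinite end of $\mathbf{H}^3/G_1$ either has a neighbourhood that embeds in $\mathbf{H}^3/G$, or else $\mathbf{H}^3/G$ has finite volume and $G_1$ is virtually a fiber subgroup. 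The standing hypothesis that $G$ has infinite co-volume kills the second alternative. Running the same analysis for the intermediate covering $\mathbf{H}^3/(G_1\cap G_2)\to\mathbf{H}^3/G_1$ and combining it with the equality of limit sets would upgrade to $[G_1:G_1\cap G_2]<\infty$, with the symmetric argument giving $[G_2:G_1\cap G_2]<\infty$.

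The main obstacle I anticipate is exactly this last transfer from limit-set equality to finite index in the geometrically infinite setting, where $G_1\cap G_2$ is not a priori known to be finitely generated. The infinite co-volume hypothesis is essential: it is precisely what shuts down the virtually fiber alternative in Canary's theorem, and without it the higher dimensional example of Susskind cited in the introduction shows that limit-set equality can fail to imply commensurability. The density of hyperbolic fixed points in the limit set then yields the ``in particular'' clause: $\Ax(G_1)=\Ax(G_2)$ is equivalent to $\Lambda(G_1)=\Lambda(G_2)$, the inclusion $\Ax(G_1\cap G_2)\subset\Ax(G_i)$ is automatic, and for any hyperbolic $g\in G_i$ a positive power $g^n$ lies in the finite-index subgroup $G_1\cap G_2$ with the same axis as $g$, giving the reverse inclusion and the equality $\Ax(G_1)=\Ax(G_2)=\Ax(G_1\cap G_2)$.
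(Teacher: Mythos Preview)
Your overall strategy diverges from the paper's, and the divergence matters. The paper does \emph{not} try to extract finite index directly from Canary's Covering Theorem. Instead, it first establishes a purely algebraic fact: for every $g\in G_i$ there exists $k>0$ with $g^k\in G_1\cap G_2$. For hyperbolic $g$ this comes from Lemma~\ref{lem:tame} (which gives $\Ax(G_1)=\Ax(G_2)=\Ax(G_1\cap G_2)$) together with Lemma~\ref{lem:burnside}; for parabolic $g$ it uses Anderson's Theorem~B from \cite{aj2} and a short horodisc argument. Then the paper replaces $G$ by $\langle G_1,G_2\rangle$, observes this still has infinite co-volume, and invokes Thurston's geometrization for Haken manifolds to produce a geometrically finite Kleinian group abstractly isomorphic to $G$. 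The algebraic ``power lies in the intersection'' property transfers under this isomorphism, forces equality of limit sets on the geometrically finite side, and Proposition~\ref{prop:geocomm} finishes. Canary's theorem is not used here at all; it appears only in the separate proof of Theorem~\ref{thm:axes}.

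Your Case~2 has a genuine gap at the sentence ``Running the same analysis for the intermediate covering $\mathbf{H}^3/(G_1\cap G_2)\to\mathbf{H}^3/G_1$ \dots\ would upgrade to $[G_1:G_1\cap G_2]<\infty$.'' The Covering Theorem only tells you that each geometrically infinite end of the cover has a neighbourhood mapping finite-to-one; it says nothing about the compact core or the geometrically finite ends, so by itself it does not give a finite-sheeted cover. Promoting that local statement to global finite index, under the hypothesis $\Lambda(G_1\cap G_2)=\Lambda(G_1)$, is exactly the content of Theorem~\ref{thm:axes}, whose proof in the paper requires a nontrivial argument tracking preimages of a point inside the convex core and adjusting the horoball system so that infinitely many preimages escape every cusp and every compact set. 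You would need to reproduce that argument. Incidentally, the obstacle you flag---finite generation of $G_1\cap G_2$---is not the real issue: Anderson's finitely generated intersection property \cite{aj3} disposes of it. The real missing ingredient is the passage from the Covering Theorem's end-local conclusion to a global index bound.
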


Similarly, in the following case we are able to remove the ambient
discrete group.

\begin{cor} \label{cor:tamecomm}
Let $G_1, G_2 \subset \Isom(\mathbf{H}^3)$ be two non-elementary
finitely generated Kleinian groups of the second kind whose limit sets
are not circles. Then $G_1$ and $G_2$ are commensurable if and only
if $\Lambda(G_1) = \Lambda(G_2)$.
\end{cor}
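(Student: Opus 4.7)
The easy direction, commensurability implies $\Lambda(G_1)=\Lambda(G_2)$, is immediate. For the converse, the plan is to exhibit an infinite co-volume Kleinian group containing both $G_1$ and $G_2$ and then apply Theorem~\ref{thm:tamecomm}. The natural candidate is $H:=\langle G_1,G_2\rangle\subset\Isom(\mathbf{H}^3)$. Once $H$ is known to be discrete, the remaining hypotheses of Theorem~\ref{thm:tamecomm} come essentially for free: setting $\Lambda:=\Lambda(G_1)=\Lambda(G_2)$, we have $\Lambda\subseteq\Lambda(H)$ since $G_1\subseteq H$, while $\Lambda$ is a closed $H$-invariant subset of $S^2$ with more than two points, so the minimality of the limit set for the non-elementary group $H$ forces $\Lambda(H)\subseteq\Lambda$. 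Hence $\Lambda(H)=\Lambda\neq S^2$ by the second-kind hypothesis, so $H$ is of the second kind and of infinite co-volume, and Theorem~\ref{thm:tamecomm} applied to $G_1,G_2\subseteq H$ yields commensurability.

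The main obstacle is establishing the discreteness of $H$, and this is the step that uses the hypothesis that $\Lambda$ is not a circle. I would argue by contradiction. If $H$ were non-discrete, its closure $\overline{H}$ in the Lie group $\Isom(\mathbf{H}^3)$ would have non-trivial identity component $\overline{H}{}^{0}$, a connected closed normal subgroup of $\overline{H}$. The classification of connected closed subgroups of $\mathrm{PSL}(2,\C)$ leaves three mutually exclusive possibilities for $\overline{H}{}^{0}$: (i) $\overline{H}{}^{0}$ is elementary, fixing a point in $\overline{\mathbf{H}^3}$ or a pair of points in $\partial\mathbf{H}^3$; (ii) $\overline{H}{}^{0}$ is conjugate to $\mathrm{PSL}(2,\mathbb{R})$ and preserves a round circle $C\subseteq S^2$; or (iii) $\overline{H}{}^{0}=\mathrm{PSL}(2,\C)$.

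Case (iii) is impossible: $\Lambda$ would be a non-empty $\mathrm{PSL}(2,\C)$-invariant closed subset of $S^2$, hence equal to $S^2$, contradicting the second-kind hypothesis. In case (i), normality of $\overline{H}{}^{0}$ makes the one- or two-point fixed set of $\overline{H}{}^{0}$ an $\overline{H}$-invariant subset of $\overline{\mathbf{H}^3}$, which forces $H$, and in particular its non-elementary subgroup $G_1$, to be elementary, a contradiction. Case (ii) is precisely the scenario ruled out by the assumption ``$\Lambda$ is not a circle'': by normality $\overline{H}$ preserves $C$, so $G_1$ is a Fuchsian group with $\Lambda\subseteq C$; but $\overline{H}{}^{0}\cong\mathrm{PSL}(2,\mathbb{R})$ acts transitively on $C$ and $\Lambda$ is $\overline{H}$-invariant, so the $\overline{H}{}^{0}$-orbit of any point of $\Lambda\subseteq C$ sweeps out all of $C$, giving $C\subseteq\Lambda$ and therefore $\Lambda=C$, a circle, a contradiction. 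With all three cases ruled out, $H$ is discrete, and the argument above completes the proof.
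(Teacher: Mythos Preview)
Your proof is correct and follows essentially the same strategy as the paper: produce an ambient discrete group of infinite co-volume containing both $G_1$ and $G_2$, then invoke Theorem~\ref{thm:tamecomm}. The only difference is in how the ambient group is obtained. The paper (following the proof of Corollary~\ref{cor:geocomm}) takes the full stabilizer of $\Lambda$ in $\Isom(\mathbf{H}^3)$ and cites Greenberg's theorem \cite{lg} for its discreteness; you instead take $H=\langle G_1,G_2\rangle$ and prove discreteness directly via the classification of closed connected subgroups of $\mathrm{PSL}(2,\C)$. Your argument is essentially the proof of Greenberg's theorem specialized to dimension~$3$, so the two routes are really the same idea, with yours being self-contained rather than relying on a citation. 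One small point: in case~(i) the phrase ``one- or two-point fixed set'' is slightly imprecise (for instance, a one-parameter rotation group fixes an entire geodesic), but what you need---a canonical finite $\overline{H}$-invariant subset of $\overline{\mathbf{H}^3}$---is available in every elementary case, so the conclusion stands.
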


In fact, under the hypotheses of the above results, the condition of having
the same limit sets of two Kleinian subgroups exactly implies having
the same axes sets. But Anderson's original formulation of the question
is only to suppose that two Kleinian groups have the same axes set.
So it is interesting to explore whether there is some essential
difference between the limit set and axes set. The following theorem
is a result in this direction, suggesting that the
`same axes set' condition is necessary in general for Anderson's
question.

\begin{thm} \label{thm:axes}
Let $H$ be a non-elementary finitely generated subgroup of a Kleinian
group $G \subset \Isom(\mathbf{H}^3)$. Suppose that $\Lambda(H) =
\Lambda(G)$. Then $[G:H]$ is finite if and only if $G$ is not virtually
fibered over $H$. In particular, $[G:H]$ is finite if and only if $\Ax(H)
= \Ax(G)$.
\end{thm}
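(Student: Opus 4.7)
The plan is to prove the stated equivalence by first disposing of the elementary direction, then attacking the harder one via the topological tameness of $H$ and an appeal to Canary's Covering Theorem. For the easy direction, if $[G:H]$ is finite then every $g\in G$ has a power in $H$, so $\Ax(H)=\Ax(G)$; moreover a virtual fibration of (a finite-index subgroup of) $G$ over the circle with fiber $H$ would force $[G:H]=\infty$, so both right-hand conditions follow. It therefore remains to prove: assuming $\Lambda(H)=\Lambda(G)$ and $G$ is not virtually fibered over $H$, one has $[G:H]<\infty$; the axes formulation will then follow formally.

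First I would show that $H$ must be geometrically infinite. If $H$ were geometrically finite then, since $\Lambda(G)=\Lambda(H)$, the group $G$ preserves the convex hull of $\Lambda(H)$ in $\mathbf{H}^3$. When $\Lambda(H)=S^2$, $H$ is a lattice and any discrete overgroup satisfies $[G:H]<\infty$; when $\Lambda(H)$ is a round circle, $H$ and $G$ both lie in the stabilizer of a common totally geodesic plane and an area-ratio argument gives finite index; otherwise the convex hull has nonempty interior in $\mathbf{H}^3$, Sullivan's finiteness yields finite volume for its quotient by $H$, and the covering of the $G$-quotient by the $H$-quotient therefore has finite degree. In every subcase $[G:H]<\infty$, contradicting the standing assumption, so $H$ is geometrically infinite.

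The main step is then Canary's Covering Theorem applied to the cover $p\colon M_H\to M_G$, where $M_H=\mathbf{H}^3/H$ and $M_G=\mathbf{H}^3/G$. By the Tameness Theorem (\cite{ag},\cite{cag}) the manifold $M_H$ is topologically tame, and being geometrically infinite it admits a geometrically infinite end $E$. Canary's theorem forces one of two alternatives: either $p$ is finite-to-one on some neighborhood of $E$, or $M_H$ is virtually an interval bundle that finitely covers a corresponding bundle structure in $M_G$. The second alternative is precisely the statement that $H$ is a virtual fiber subgroup of $G$, which is excluded by hypothesis. In the first alternative, I would combine the local finiteness at each geometrically infinite end of $M_H$ with Ahlfors finiteness on the (possibly present) geometrically finite ends---which contribute only finite-degree covers thanks to $\Lambda(H)=\Lambda(G)$---to deduce that the total covering degree $[G:H]$ is finite.

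The axes equivalence then follows: $\Ax(H)\subseteq\Ax(G)$ always, and since fixed points of hyperbolic elements are dense in the limit set, $\Ax(H)=\Ax(G)$ implies $\Lambda(H)=\Lambda(G)$, so the first half of the theorem applies to the axes hypothesis. To close the circle, I must rule out virtual fibering when $\Ax(H)=\Ax(G)$: choose $g\in G$ whose image in the cyclic quotient $G_0/H$ (for the fibered finite-index $G_0\supset H$) is nontrivial, so that no power of $g$ lies in $H$. If $\Ax(g)=\Ax(h)$ for some hyperbolic $h\in H$, then $g$ and $h$ share both fixed points on $\partial\mathbf{H}^3$, the subgroup they generate in $G$ is virtually cyclic, and some relation $g^n=h^m$ with $n\neq 0$ forces $g^n\in H$---a contradiction. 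The main obstacle throughout is the third step: correctly invoking Canary's Covering Theorem (which itself depends on the recent Tameness Theorem) and upgrading its local ``finite at infinity'' conclusion to a globally finite covering degree.
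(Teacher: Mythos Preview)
Your overall strategy matches the paper's: dispose of the geometrically finite case via Susskind--Swarup, apply Canary's Covering Theorem in the geometrically infinite case, and rule out virtual fibering under the axes hypothesis by the ``shared axis $\Rightarrow$ common power'' argument (the paper's Lemma~\ref{lem:burnside}). The gap you yourself flag---upgrading the \emph{local} finite-to-one conclusion on neighbourhoods of geometrically infinite ends to a \emph{globally} finite covering degree---is genuine, and your proposed fix via Ahlfors finiteness on geometrically finite ends does not close it. Knowing that $p$ is finite-to-one on each of several open sets says nothing, a priori, about the cardinality of a full fibre $p^{-1}(x)$; and you do not treat the cusp ends at all.

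The paper fills this gap by a direct point-counting argument that exploits $\Lambda(H)=\Lambda(G)$ more sharply than you do. Since the convex hulls in $\mathbf{H}^3$ coincide, the covering restricts to $C(H)\to C(G)$; choosing $z$ in $C(H)$ and inside a geometrically infinite end neighbourhood, the entire fibre $S=p^{-1}(p(z))$ lies in $C(H)$, so geometrically finite ends never enter the picture and your Ahlfors argument is unnecessary. For cusps, the paper observes that all points of the lift $\widetilde S$ lying in a given horoball have the same height (they form a single $G$-orbit, and the horoball is precisely invariant under the parabolic stabilizer in $G$), so one may shrink the horoball system to push all but finitely many points of $S$ out of each cusp. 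The infinite discrete set $S$ then cannot sit in any compact subset of $C(H)$, and with cusps and geometrically finite ends excluded, infinitely many points of $S$ land in the finitely many geometrically infinite end neighbourhoods, contradicting Canary's finite-to-one conclusion there.

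Two minor points. Your phrase ``contradicting the standing assumption'' in the geometrically finite subcase is misworded: you have simply proved $[G:H]<\infty$ there, so say ``hence we may assume $H$ is geometrically infinite.'' And in your axes argument (as in the paper's), the chosen $g$ mapping nontrivially to $G_0/H_0\cong\mathbf Z$ must be taken hyperbolic; if a given representative is parabolic, replace it by $gh^n$ for some hyperbolic $h\in H_0$ and large $n$.
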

\begin{rem}
We remark that the case of $H$ being geometrically finite is proved
by P. Susskind and G. Swarup \cite[Theorem 1]{ss}. Theorem
\ref{thm:axes} actually proves the special case of Anderson's question where
$G_1$ is a subgroup of $G_2$.
\end{rem}



The paper is organized as follows. In section \ref{sec:2}, we collect some
results on the limit set of the intersection of two Kleinian groups and
prove some useful lemmas for later use. In section \ref{sec:3}, we prove
Theorems \ref{thm:tamecomm} and \ref{thm:axes}.


\section{Preliminaries}\label{sec:2}

Let $\textbf{B}^n$ denote the closed ball $\mathbf{H}^n\cup
\mathbf{S}^{n-1}$, whose boundary $\mathbf{S}^{n-1}$ is identified
via stereographic projection with $\overline{\mathbf{R}}^{n-1} =
\mathbf{R}^{n-1} \cup \infty$. Let $\Isom(\mathbf{H}^n)$ be the full
group of isometries of $\mathbf{H}^n$ and let $G \subset
\Isom(\mathbf{H}^n)$ be a \textit{Kleinian} group; that is, a
discrete subgroup of $\Isom(\mathbf{H}^n)$. Then $G$ acts discontinuously
on $\mathbf{H}^n$ if and only if $G$ is discrete. Furthermore, $G$ acts on
$\mathbf{S}^{n-1}$ as a group of conformal homeomorphisms. The
\textit{set of discontinuity} $\Omega(G)$ of $G$ is the subset of
$\mathbf{S}^{n-1}$ on which $G$ acts discontinuously; the
\textit{limit set} $\Lambda(G)$ is the complement of $\Omega(G)$
in $\mathbf{S}^{n-1}$. A Kleinian group is said to be of the \textit{second
kind} if $\Omega(G)$ is nonempty otherwise it is said to be of the \textit{first
kind}.

The elements of  $\Isom(\mathbf{H}^n)$  are classified in terms of
their fixed point sets. An element $g \neq \text{id}$ in
$\Isom(\mathbf{H}^n)$ is $\textit{elliptic}$ if it has a fixed point
in $\mathbf{H}^n$, \textit{parabolic} if it has exactly one fixed
point which lies in $\mathbf{S}^{n-1}$, \textit{hyperbolic} if it
has exactly two fixed points which lie in $\mathbf{S}^{n-1}$. The
unique geodesic joining the two fixed points of the hyperbolic
element $g$, which is invariant under $g$, is called the
\textit{axis} of the hyperbolic element and is denoted by
$\ax(g)$. The limit set $\Lambda(G)$ is the closure of the set of fixed
points of hyperbolic and parabolic elements of $G$. A Kleinian group
whose limit set contains fewer than three points is called
$\textit{elementary}$ and is otherwise called
$\textit{non-elementary}$.

For a non-elementary Kleinian group $G$, define  $\widetilde{C}(G)$
to be the smallest nonempty convex set in $\mathbf{H}^n$ which is
invariant under the action of $G$; this is the \textit{convex hull}
of $\Lambda(G)$. The quotient $C(G) = \widetilde{C}(G)/G$ is the
\textit{convex core} of $M = \mathbf{H}^n/G$. The group $G$ is
\textit{geometrically finite} if the convex core $C(G)$ has finite
volume.

By Margulis's lemma, it is known that there is a positive constant
$\epsilon_0$ such that for any Kleinian group $G \in
\Isom(\mathbf{H}^n)$ and $\epsilon < \epsilon_0$, the part of
$\mathbf{H}^n/G$ where the injectivity radius is less than
$\epsilon$ is a disjoint union of tubular neighbourhoods of closed
geodesics, whose lengths are less than 2$\epsilon$, and cusp
neighbourhoods. In dimensions 2 and 3, these cusp neighbourhoods can
be taken to be disjoint quotients of horoballs by the corresponding
parabolic subgroup. This set of disjoint horoballs is called a
\textit{precisely invariant system of horoballs} for $G$. In
dimension 3, it is often helpful to identify the infinity boundary
$\mathbf{S}^2$ of $\mathbf{H}^3$ with the extended complex plane
$\overline{\mathbf C}$. In particular, the fixed point of a rank 1
parabolic subgroup $J$ of $G$ is called $\textit{doubly cusped}$ if
there are two disjoint circular discs $B_1$, $B_2 \subset
\overline{\mathbf C}$ such that $B_1\cup B_2$ is precisely invariant
under $J$ in $G$. In this case, the parabolic elements of $J$ are
also called doubly cusped.

In dimension 3, we call a Kleinian group $G$ \textit{topologically
tame} if the manifold $M = \mathbf{H}^3/G$ is homeomorphic to the
interior of a compact 3-manifold. Denote by $M^c$ the complement of
these cusp neighbourhoods. Using the relative core theorem of
McCullough \cite{mc}, there exists a compact submanifold $N$ of
$M^c$ such that the inclusion of $N$ in $M^c$ is a homotopy
equivalence, every torus component of $\partial(M^c)$ lies in $N$,
and $N$ meets each annular component of $\partial(M^c)$ in an
annulus. Call such an $N$ a \textit{relative compact core} for $M$.
The components of $\partial(N) -
\partial(M^c)$ are the \textit{relative boundary components} of $N$. The
\textit{ends} of $M^c$ are in one-to-one correspondence with the
components of $M^c-N$. An end $E$ of $M^c$ is \textit{geometrically
finite} if it has a neighbourhood disjoint from $C(G)$. Otherwise,
$E$ is \textit{geometrically infinite}.

A Kleinian group $G \subset \Isom(\mathbf{H}^3)$ is \textit{virtually
fibered} over a subgroup $H$ if there are finite index subgroups
$G^0$ of $G$ and $H^0$ of $H$ such that $\mathbf H^3/G^0$ has finite volume
and fibers over the circle with the fiber subgroup $H^0$. Note that
$H^0$ is then a normal subgroup of $G^0$, and so $\Lambda(H^0)$ =
$\Lambda(G^0)$ = $\mathbf{S}^2$.

In order to analyze the geometry of a geometrically infinite Kleinian
group, we will use Canary's covering theorem, which
generalizes a theorem of Thurston \cite{thurston}. Note that the
Tameness Theorem (\cite{ag} and \cite{cag}) states that all finitely
generated Kleinian groups in $\Isom(\mathbf{H}^3)$ are topologically
tame.
\begin{thm}[{\cite[The Covering Theorem]{ca}}] \label{thm:covering}
Let $G$ be a torsion free Kleinian group in $\Isom(\mathbf{H}^3)$ and
let $H$ be a non-elementary finitely generated subgroup of $G$. Let
$N = \mathbf{H}^3/G$, let $M = \mathbf{H}^3/H$, and let $p : M
\rightarrow N$ be the covering map. If $M$ has a geometrically
infinite end $E$, then either $G$ is virtually fibered over $H$ or
$E$ has a neighbourhood $U$ such that $p$ is finite-to-one on $U$.
\end{thm}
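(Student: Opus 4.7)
The plan is to exploit the geometry of pleated surfaces exiting the geometrically infinite end $E$ and extract the dichotomy from how they project to $N$. First, since $H$ is non-elementary and finitely generated, the Tameness Theorem identifies $M = \mathbf{H}^3/H$ with the interior of a compact 3-manifold, so $E$ has a neighborhood of the form $S \times [0,\infty)$ for some compact surface $S$. Because $E$ is geometrically infinite, Bonahon's theorem (in its tame version) supplies a sequence of pleated surfaces $f_i : S \to M$ homotopic in $E$ to $S \times \{0\}$ whose images exit every compact subset of $E$. By Gauss--Bonnet, each $f_i(S)$ has hyperbolic area at most $-2\pi\chi(S)$, a bound uniform in $i$.

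Next I would analyze $g_i = p\circ f_i : S \to N$. If there is a neighborhood $U$ of $E$ such that $p|_U$ is finite-to-one, the second alternative of the conclusion holds. Otherwise, on every neighborhood of $E$ the map $p$ has arbitrarily large multiplicity; translating via the $G$-action, this means that for arbitrarily large $k$ there exist $k$ distinct, disjoint $H$-translates of some $f_i(S)$ sitting inside a prescribed neighborhood of $E$ in $M$, all projecting to the same $g_i(S)$. Since each translate has hyperbolic area at most $-2\pi\chi(S)$, and pleated surfaces cannot be compressed into arbitrarily thin tubes without forcing their injectivity radius below the Margulis constant, the images $g_i(S)$ must be trapped in a subset of $N$ of uniformly bounded diameter. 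Passing to a subsequence, the $g_i(S)$ exit a single geometrically infinite end $E'$ of $N$ whose fiber topology is $S$.

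The main obstacle is upgrading this qualitative picture to a genuine fibration. The idea is that $p$, restricted to a neighborhood of $E$, realizes a cyclic cover of a neighborhood of $E'$: the deck transformation is generated by the element of $G$ producing the unbounded multiplicity, and the iterated $G$-translates of the pleated surfaces sweep out a product structure $S \times \mathbf{R}$ in a neighborhood of $E$. Selecting a torsion-free finite-index subgroup $G^0 \leq G$ that contains this deck element and the corresponding $H^0 \leq H$, one argues that $\mathbf{H}^3/G^0$ has finite volume and fibers over $S^1$ with fiber surface $S$, and that $H^0$ is precisely the fiber subgroup. This is the definition of $G$ being virtually fibered over $H$. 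The most delicate part throughout is the geometric implication that unbounded multiplicity of $p$ forces the pleated surfaces $g_i(S)$ to have bounded image in $N$; this rests on a careful analysis of how pleated surfaces of fixed topological type can sit inside a hyperbolic 3-manifold, combining the Margulis lemma with injectivity-radius bounds along pleated loci.
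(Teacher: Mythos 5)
Before anything else: the paper contains no proof of this statement. It is quoted verbatim as Canary's Covering Theorem \cite{ca} (with the Tameness Theorem \cite{ag}, \cite{cag} supplying topological tameness of $M$), so there is no in-paper argument to compare against; your attempt has to be measured against Canary's own proof. Its opening strategy is indeed the one you follow: a sequence of bounded-area pleated (or simplicial hyperbolic) surfaces exiting the simply degenerate end, and a dichotomy according to whether $p$ is finite-to-one on some neighbourhood of $E$.

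The genuine gap is in your middle step. When $p$ fails to be finite-to-one on every neighbourhood of $E$, the correct intermediate conclusion---and the one Canary actually derives---is that $N$ has \emph{finite volume}: the projected surfaces $g_i(S)$ have uniformly bounded area, hence bounded diameter modulo the thin part, and the unbounded multiplicity forces them all to pass within bounded distance of a fixed compact set, so that the non-cuspidal part of $N$ is itself bounded. Your text instead asserts that the $g_i(S)$ ``exit a single geometrically infinite end $E'$ of $N$,'' which contradicts your own preceding sentence (that they are trapped in a set of uniformly bounded diameter) and is incompatible with the alternative you are trying to reach: if $G$ is virtually fibered over $H$, then $\mathbf{H}^3/G$ has finite volume, so $N$ has no geometrically infinite ends at all. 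There are further slips. The surfaces of $M$ projecting to the same $g_i(S)$ are not ``$H$-translates of $f_i(S)$''---an $H$-translate of a lift descends to the \emph{same} surface in $M = \mathbf{H}^3/H$; the distinct preimages correspond to cosets $Hg$ with $g \in G \setminus H$. And the covering $p : M \to N$ is not normal, so it has no deck transformation ``generated by the element of $G$ producing the unbounded multiplicity.'' The actual argument must exhibit an element $g \in G \setminus H$ translating a product neighbourhood of the end over itself, show that the swept region covers the non-cuspidal part of $N$ (giving the finite volume), and then prove that, after passing to finite-index subgroups, $H$ is normal in $\langle H, g \rangle$ with quotient $\mathbf{Z}$ and that this subgroup has finite index in $G$. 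None of these steps is routine, and as written the proposal does not reach the fibration.
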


Now we list several results on the limit set of the
intersection of two Kleinian groups, which describe
$\Lambda(G_1\cap{G_2})$ in terms of $\Lambda(G_1)$ and
$\Lambda(G_2)$, where $G_1$ and $G_2$ are subgroups of a Kleinian
group $G$. Here we only collect the results used in this paper and
state them in an appropriate form for our purpose. See \cite{aj} for
a useful survey and the bibliography therein for the results in full
details.
\begin{thm}[{\cite[Theorem 3]{ss}}] \label{thm:ss}
Let $G_1$, $G_2$ be two geometrically finite subgroups of Kleinian
group $G \subset \Isom(\mathbf{H}^n)$. Then $\Lambda(G_1) \cap
\Lambda(G_2) = \Lambda(G_1\cap{G_2}) \cup P$ where $P$ consists of
some parabolic fixed points of $G_1$ and $G_2$.
\end{thm}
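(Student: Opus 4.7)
\medskip
\noindent\textbf{Proof plan.} The easy inclusion $\Lambda(G_1 \cap G_2) \subseteq \Lambda(G_1) \cap \Lambda(G_2)$ is immediate since $G_1 \cap G_2$ sits inside each $G_i$. The content of the theorem lies in the reverse inclusion up to parabolic fixed points, i.e., in showing that every point of $\Lambda(G_1) \cap \Lambda(G_2)$ which is not a parabolic fixed point of $G_1$ or $G_2$ belongs to $\Lambda(G_1 \cap G_2)$.

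For this I would invoke the Beardon--Maskit characterization of geometric finiteness: for each $i$, the limit set $\Lambda(G_i)$ decomposes as the disjoint union of conical limit points and bounded parabolic fixed points. Thus any $x \in \Lambda(G_1) \cap \Lambda(G_2)$ outside $P$ is automatically a conical limit point of both $G_1$ and $G_2$. Fix a basepoint $o \in \mathbf{H}^n$ and the geodesic ray $\gamma$ from $o$ to $x$. By the conical limit condition applied to each group separately, there exist a radius $R > 0$ and sequences $g_n \in G_1$, $h_n \in G_2$ such that $g_n \cdot o$ and $h_n \cdot o$ tend to $x$ while remaining within hyperbolic distance $R$ of $\gamma$.

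The crux of the argument is a pigeonhole step. After reindexing so that $g_n \cdot o$ and $h_n \cdot o$ sit at comparable heights along $\gamma$, one obtains a uniform bound $d(g_n \cdot o, h_n \cdot o) \leq 2R + C$. Proper discontinuity of the ambient Kleinian group $G$ on $\mathbf{H}^n$ forces the elements $g_n^{-1} h_n \in G$ to lie in a finite set. Passing to a subsequence on which $g_n^{-1} h_n = \gamma_0$ is constant gives $h_n = g_n \gamma_0$, whence
$$g_n g_m^{-1} = h_n h_m^{-1} \in G_1 \cap G_2$$
for all $n, m$ in the subsequence. Letting $n \to \infty$ with $m$ fixed, $(g_n g_m^{-1}) \cdot (g_m \cdot o) = g_n \cdot o \to x$, so $x$ is a limit point of a $(G_1 \cap G_2)$-orbit, giving $x \in \Lambda(G_1 \cap G_2)$.

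The main technical obstacle lies in the matching step: aligning two independently chosen conical approximations of $x$ along the common ray $\gamma$ so as to produce pairs $(g_n \cdot o, h_n \cdot o)$ with uniformly bounded separation. This requires extracting nested subsequences indexed by height along $\gamma$ and using that the non-tangential shadow of $\gamma$ has bounded girth. Geometric finiteness enters essentially through the Beardon--Maskit dichotomy — without it, one could not reduce the analysis to the conical case, and the exceptional set $P$ might include limit points of a qualitatively different nature than the parabolic fixed points allowed by the statement.
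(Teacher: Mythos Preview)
The paper does not prove this statement at all: Theorem~\ref{thm:ss} is quoted verbatim from Susskind--Swarup \cite[Theorem 3]{ss} as a black box, with no argument given. So there is no ``paper's own proof'' to compare against.

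That said, your sketch is essentially the original Susskind--Swarup argument. The core step --- that a common conical limit point of $G_1$ and $G_2$ lies in $\Lambda(G_1\cap G_2)$, via proper discontinuity of the ambient $G$ forcing $g_n^{-1}h_n$ into a finite set --- is exactly their Lemma~2. One point you glossed over: you assert that any $x$ ``outside $P$'' is conical for \emph{both} groups, but $P$ is defined only a posteriori by the equation, so this phrasing is circular. The clean reduction is: if $x$ is a parabolic fixed point of $G_1$, then it is a parabolic fixed point of the ambient discrete group $G$, hence cannot be a conical limit point of $G_2\subset G$; Beardon--Maskit for $G_2$ then forces $x$ to be a parabolic fixed point of $G_2$ as well. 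Thus every point of $\Lambda(G_1)\cap\Lambda(G_2)$ is either conical for both or parabolic for both, and your pigeonhole argument handles the former case. With that clarification the sketch is sound.
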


\begin{prop}[{\cite[Corollary 1]{ss}}] \label{prop:ss}
Let $H$ be geometrically finite and $j$  be a hyperbolic element
with a  fixed  point in $\Lambda(H)$. If  $\langle H,j\rangle$ is discrete, then
$j^n \in H$  for some $n > 0$.
\end{prop}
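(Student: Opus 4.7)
The plan is to exploit that $H$ is geometrically finite, so every point of $\Lambda(H)$ is either a conical limit point or a bounded parabolic fixed point of $H$. The argument splits on which type the fixed point $p$ of $j$ is.

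First I would rule out that $p$ is a bounded parabolic fixed point of $H$. If a non-trivial parabolic $\rho\in H$ fixes $p$, normalize so that $p=\infty$ in the upper half-space model of $\mathbf{H}^n$; then $\rho(z)=z+v$ for some $v\neq 0$ and $j(z)=\lambda A z$ for some orthogonal $A$ and $|\lambda|\neq 1$. Replacing $j$ by $j^{-1}$ if necessary, assume $|\lambda|>1$. Then $j^{-k}\rho j^{k}$ is a non-trivial parabolic with translation vector $\lambda^{-k}A^{-k}v$ of Euclidean norm $|\lambda|^{-k}|v|\to 0$, so these distinct non-trivial elements of $\langle H,j\rangle$ accumulate at the identity, contradicting discreteness.

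Hence $p$ is a conical limit point of $H$, and the stabilizer $H_p=\{h\in H:h(p)=p\}$ contains no parabolics. Since stabilizers of boundary points in discrete Kleinian groups are elementary, $H_p$ is either finite (elliptics only) or virtually infinite cyclic generated by a loxodromic. If $H_p$ contains a loxodromic $h$, then $h$ and $j$ share the fixed point $p$; two loxodromic elements of a discrete Kleinian group sharing one boundary fixed point must share both (otherwise the dynamics on horospheres produce accumulation at the identity), so $\langle h,j\rangle$ is elementary with common axis and virtually cyclic, yielding positive integers $m,n$ with $j^m=h^n\in H$.

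The remaining and principal case is $H_p$ finite. Fix a point $o'$ on $\ax(j)$ and let $\alpha$ be the sub-ray of $\ax(j)$ from $o'$ to $p$. Since $p$ is conical, choose a sequence of distinct $h_k\in H$ with $h_k(o')\to p$ inside a fixed tubular neighbourhood of $\alpha$. For each $k$, pick $m_k\in\mathbb{Z}$ minimizing the distance from $j^{m_k}(o')$ to the nearest-point projection of $h_k(o')$ onto $\alpha$; then $d(o',h_k^{-1}j^{m_k}(o'))=d(j^{m_k}(o'),h_k(o'))$ is uniformly bounded in $k$. By discreteness of $\langle H,j\rangle$, the elements $h_k^{-1}j^{m_k}$ take only finitely many values. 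Since $h_k(o')\to p$ along $\alpha$ the integers $m_k$ tend to infinity, so among the infinitely many $k$ with $h_k^{-1}j^{m_k}$ equal to a fixed value, there are $k\neq k'$ with $h_k\neq h_{k'}$; these yield the non-trivial relation $j^{m_k-m_{k'}}=h_k h_{k'}^{-1}\in H$. The principal obstacle is this last step: converting the qualitative conical approach into a quantitative pigeonhole that produces a genuinely non-trivial power of $j$ in $H$, which requires both the uniformity of the tubular distance from $h_k(o')$ to $\alpha$ and the fact that the extracted indices $m_k$ genuinely vary.
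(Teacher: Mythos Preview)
The paper does not give its own proof of this proposition: it is imported directly as Corollary~1 of Susskind--Swarup and used as a black box (for instance in the proof of Lemma~\ref{lem:geo}). There is therefore nothing in the present paper to compare your argument against.

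That said, your proof is correct and follows the standard route through the Beardon--Maskit (in higher dimensions, Bowditch) characterisation of geometric finiteness: every point of $\Lambda(H)$ is either a bounded parabolic point or a conical limit point. The parabolic case is disposed of by the classical incompatibility of a shared fixed point between a hyperbolic and a parabolic element in a discrete group, a fact the paper itself invokes verbatim in the proof of Lemma~\ref{lem:geo}. Your conical-limit-point argument is the usual pigeonhole: the orbit points $h_k(o')$ track $\ax(j)$, so each $h_k^{-1}j^{m_k}$ moves $o'$ a uniformly bounded amount and hence, by discreteness of $\langle H,j\rangle$, takes only finitely many values; since $m_k\to\infty$, any infinite fibre of $k\mapsto h_k^{-1}j^{m_k}$ contains indices with distinct $m_k$, yielding a nontrivial power of $j$ in $H$. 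The side case where $H_p$ already contains a loxodromic is a convenient shortcut but not strictly necessary, as the conical argument subsumes it. One cosmetic point: for a hyperbolic $j\in\Isom(\mathbf{H}^n)$ normalised to fix $0$ and $\infty$, the dilation factor $\lambda$ is a positive real, so one should write $\lambda>1$ rather than $|\lambda|>1$.
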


Based on the above results, we get the following lemma
characterizing the relationship between limit sets and axes sets.
\begin{lem} \label{lem:geo}
Let $G_1$, $G_2$ be two geometrically finite subgroups of Kleinian
group $G \subset \Isom(\mathbf{H}^n)$. Then $\Lambda(G_1) =
\Lambda(G_2)$ if and only if $\Ax(G_1) = \Ax(G_2) = \Ax(G_1 \cap G_2)$
\end{lem}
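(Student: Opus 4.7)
The plan is to use the two Susskind results (Theorem \ref{thm:ss} and Proposition \ref{prop:ss}) already cited in the paper; the lemma is essentially a direct repackaging of them.

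For the forward implication, assume $\Lambda(G_1)=\Lambda(G_2)$. Take any hyperbolic $j\in G_1$. Its two fixed points lie in $\Lambda(G_1)=\Lambda(G_2)$, and since $j,G_2\subset G$ the group $\langle G_2,j\rangle$ is discrete. Applying Proposition \ref{prop:ss} to the geometrically finite group $G_2$ and the element $j$ yields a power $j^n\in G_2$ for some $n>0$. Therefore $j^n\in G_1\cap G_2$ and $\ax(j)=\ax(j^n)\in\Ax(G_1\cap G_2)$, giving $\Ax(G_1)\subseteq\Ax(G_1\cap G_2)$. By the symmetric argument $\Ax(G_2)\subseteq\Ax(G_1\cap G_2)$, and the reverse inclusion $\Ax(G_1\cap G_2)\subseteq\Ax(G_1)\cap\Ax(G_2)$ is tautological, so the three axis sets coincide.

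For the backward implication, suppose $\Ax(G_1)=\Ax(G_2)$. Since each $G_i$ is non-elementary and geometrically finite, the set of fixed points of hyperbolic elements is dense in $\Lambda(G_i)$ (this is standard: hyperbolic fixed points are dense in the limit set of any non-elementary discrete group). The endpoints of axes in $\Ax(G_i)$ are precisely the hyperbolic fixed points, so equality of axis sets gives equality of hyperbolic fixed-point sets on $\mathbf{S}^{n-1}$, and taking closures in $\mathbf{S}^{n-1}$ yields $\Lambda(G_1)=\Lambda(G_2)$. (If desired, one can alternatively invoke Theorem \ref{thm:ss} together with the observation that $\Ax(G_1)=\Ax(G_1\cap G_2)$ forces $\Lambda(G_1)\subseteq\overline{\Lambda(G_1\cap G_2)}\subseteq\Lambda(G_2)$, and symmetrically, to avoid quoting density separately.)

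The main obstacle, and indeed the only non-trivial input, is the invocation of Proposition \ref{prop:ss} in the forward direction: it is what converts a mere coincidence of limit sets into the much stronger algebraic statement that powers of hyperbolic elements of $G_1$ actually lie in $G_2$. The hypotheses that both subgroups are geometrically finite and that they sit inside a common Kleinian group $G$ (so that $\langle G_2,j\rangle$ is automatically discrete) are both used here; without the ambient $G$ one would have to rule out pathologies like those in Susskind's counterexample cited in the introduction. No new ideas beyond these two quoted results are needed; the remainder is routine symmetrization and the elementary fact that axis endpoints are dense in the limit set.
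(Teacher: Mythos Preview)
Your proof is correct and, in the forward direction, actually more direct than the paper's. The paper first invokes \cite[Theorem~4]{ss} to know that $G_1\cap G_2$ is again geometrically finite, then uses Theorem~\ref{thm:ss} to see that every hyperbolic fixed point of $G_i$ must lie in $\Lambda(G_1\cap G_2)$ (ruling out the exceptional parabolic set $P$ via the standard fact that a hyperbolic and a parabolic element of a discrete group cannot share a fixed point), and only then applies Proposition~\ref{prop:ss} with $H=G_1\cap G_2$. You instead apply Proposition~\ref{prop:ss} directly with $H=G_2$ (respectively $H=G_1$), which bypasses both Theorem~\ref{thm:ss} and the geometric finiteness of the intersection; this is cleaner and uses strictly fewer ingredients. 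The backward direction is identical in both arguments. The only thing your shortcut gives up is the paper's incidental observation that the set $P$ in Theorem~\ref{thm:ss} is empty under the hypothesis $\Lambda(G_1)=\Lambda(G_2)$, but that is not part of the lemma's statement.
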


\begin{proof}
If $G_1$ and $G_2$ are geometrically finite subgroups of a Kleinian
group $G$, then $G_1\cap{G_2}$ is again geometrically
finite (\cite[Theorem 4]{ss}). It is well known that a hyperbolic
element cannot share one fixed point with a parabolic element in a
discrete group. Thus by applying Theorem \ref{thm:ss} to
$\Lambda(G_1)\cap \Lambda(G_2)$, we can conclude that any hyperbolic
element $h \in G_i$ has at least one fixed point in
$\Lambda(G_1\cap{G_2})$ for $i=1,2$. Now by Proposition
\ref{prop:ss}, we have $h^j \in G_1\cap{G_2}$ for some large integer
$j>0$. This implies the axis $\ax(h)$ of $h$ belongs to
$\Ax(G_1\cap{G_2})$. Therefore we have $\Ax(G_1) = \Ax(G_2) =
\Ax(G_1\cap{G_2})$ and it also follows that $P$ is the empty set.

The other direction is easy to see using the fact that the set of fixed
points of hyperbolic elements of $G$ is dense in $\Lambda(G)$.
\end{proof}

In dimension 3, J. Anderson \cite{aj2} carried out a more careful
analysis on the limit set of the intersection of two topologically tame
Kleinian groups. Combined with the recent solution of the Tameness
Conjecture (\cite{ag} and \cite{cag}), we have the following theorem.

\begin{thm}[{\cite[Theorem C]{aj2}}] \label{thm:an}
Let $G \subset\Isom(\mathbf{H}^3)$ be a Kleinian group, and let $G_1$ and
$G_2$ be non-elementary finitely generated subgroups of $G$, then
$\Lambda(G_1) \cap \Lambda(G_2) = \Lambda(G_1\cap{G_2}) \cup P$
where $P$ is empty or consists of some parabolic fixed points of
$G_1$ and~$G_2$.
\end{thm}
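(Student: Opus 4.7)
The forward inclusion $\Lambda(G_1 \cap G_2) \cup P \subseteq \Lambda(G_1) \cap \Lambda(G_2)$ is automatic once $P$ is taken to consist of parabolic fixed points of $G_1$ (or $G_2$) that happen to lie in both limit sets. The substantive content is the other direction: given $x \in \Lambda(G_1) \cap \Lambda(G_2)$ that is not a parabolic fixed point of either $G_1$ or $G_2$, one must exhibit a sequence in $G_1 \cap G_2$ whose orbit of a fixed basepoint $o \in \mathbf{H}^3$ converges to $x$.

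The plan is to invoke the Tameness Theorem so that each $M_i = \mathbf{H}^3/G_i$ is topologically tame, and then to use the Bonahon--Canary theory of ends to stratify the non-parabolic part of each $\Lambda(G_i)$ into (a) conical limit points and (b) endpoints of geodesic rays that eventually exit a geometrically infinite end of $M_i^c$. The argument then splits according to which case $x$ falls into for $G_1$ and for $G_2$.

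The conical case is modelled on the Susskind--Swarup proof of Theorem \ref{thm:ss}: if $x$ is conical for both $G_i$, one chooses a geodesic ray $r$ from $o$ to $x$ and produces sequences $g_n \in G_1$ and $h_n \in G_2$ with $g_n(o)$ and $h_n(o)$ lying a uniformly bounded distance from $r$; the discreteness of the ambient group $G$ then forces infinitely many exact coincidences $g_n = h_{m(n)}$, which yields the required elements of $G_1 \cap G_2$ tracking $r$. If $x$ falls in case (b) for $G_1$, say, we apply Canary's Covering Theorem \ref{thm:covering} to the cover $M_1 \to \mathbf{H}^3/G$. The virtual fiber alternative forces $\Lambda(G_1) = \mathbf{S}^2$ and reduces the claim to an index computation inside a fibered subgroup; the other alternative provides a neighbourhood $U$ of the end on which the projection to $\mathbf{H}^3/G$ is finite-to-one, so the $G$-stabilizer of the corresponding end lift in $\mathbf{H}^3$ is a finite extension of the relevant $G_1$-end-stabilizer, and intersecting with $G_2$ (analyzed symmetrically) again produces elements of $G_1 \cap G_2$ accumulating at $x$.

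I expect the main obstacle to be the geometrically infinite case: translating the finite-to-one conclusion of the Covering Theorem, which is a statement about neighbourhoods of ends, into the concrete production of elements of $G_1 \cap G_2$ whose orbit of $o$ converges to $x$. This bookkeeping relies on Anderson's detailed analysis in \cite{aj2} of how individual points in $\Lambda(G_i)$ correspond to particular ends of $M_i^c$, together with a careful use of the discreteness of $G$ to constrain how the end stabilizers coming from $G_1$ and from $G_2$ interact inside $G$.
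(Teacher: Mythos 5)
First, a point of comparison that matters here: the paper contains no proof of this statement at all. It is imported verbatim as Theorem C of Anderson \cite{aj2}, with the Tameness Theorem (\cite{ag}, \cite{cag}) silently discharging Anderson's original hypothesis that the subgroups be topologically tame. So your sketch can only be measured against Anderson's published argument, and at the level of architecture it matches it: stratify the non-parabolic limit points of each $G_i$ into conical points and points associated to geometrically infinite ends, handle the conical case by a Susskind--Swarup coincidence argument as in \cite{ss}, and attack the geometrically infinite case with the Covering Theorem \ref{thm:covering}. That is the right map of the proof.

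As a proof, however, the proposal has a genuine hole exactly where you flag the ``main obstacle.'' You defer the passage from the finite-to-one conclusion of Theorem \ref{thm:covering} to the actual production of elements of $G_1\cap G_2$ accumulating at $x$ to ``Anderson's detailed analysis in \cite{aj2}'' --- but \cite{aj2} is the very paper whose Theorem C is being proved, so this is circular: that analysis (the machinery behind Theorems A and B of \cite{aj2}, quoted in this paper as Proposition \ref{prop:an} and used in the proof of Theorem \ref{thm:tamecomm}) is the substance of the result, not bookkeeping. Three further steps fail or are missing as written. In the conical case, discreteness of $G$ cannot force exact coincidences $g_n=h_{m(n)}$; it yields that $h_{m(n)}^{-1}g_n$ is a constant $c\in G$ along a subsequence, and one must pass to the ratios $g_ng_N^{-1}=h_{m(n)}h_{m(N)}^{-1}\in G_1\cap G_2$, which do track the ray to $x$ (a fixable slip). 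That trick, moreover, needs $x$ conical for \emph{both} groups, and your case division never treats the mixed case (conical for $G_1$, end-type for $G_2$), which in Anderson's treatment again routes through the covering theorem rather than the coincidence argument. Finally, the virtual-fiber alternative is not ``an index computation'': there $\Lambda(G_1)=\mathbf{S}^2$, so the claim becomes $\Lambda(G_2)=\Lambda(G_1\cap G_2)\cup P$ for an arbitrary finitely generated $G_2$; what rescues it is normality, not index --- $G_1\cap G_2$ is virtually normal in $G_2$ with quotient embedding in $\mathbf{Z}$, hence nontrivial (else $G_2$ would be elementary), and a nontrivial normal subgroup of a non-elementary Kleinian group has the full limit set --- an argument you would still need to supply. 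In short: correct outline, but the regions you marked as difficult are precisely where the proof lives, and they are not filled in.
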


\begin{prop}[{\cite[Theorem A]{aj2}}] \label{prop:an}
Let $H$ be finitely generated Kleinian group and $j$  be a
hyperbolic element with a fixed point in $\Lambda(H)$. If  $\langle H,j\rangle$
is discrete, then either $\langle H,j\rangle$ is virtually fibered over $H$ or
$j^n \in H$ for some $n > 0$.
\end{prop}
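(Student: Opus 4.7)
Set $G = \langle H, j\rangle$, $M = \mathbf{H}^3/H$, $N = \mathbf{H}^3/G$, and let $p : M \to N$ be the covering map. The axis $\ax(j)$ descends to a closed geodesic $\gamma \subset N$ and to a geodesic $\widetilde\ell \subset M$ that is either closed or a bi-infinite line. The strategy is to show: if $\widetilde\ell$ is closed then $j^n \in H$; if $\widetilde\ell$ is bi-infinite then $G$ is virtually fibered over $H$. When $H$ is geometrically finite this is exactly Proposition \ref{prop:ss}, so I assume $H$ is not geometrically finite. The Tameness Theorem ensures that $M$ is topologically tame and, by the failure of geometric finiteness, admits at least one geometrically infinite end.

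When $\widetilde\ell$ is closed, the stabilizer $K$ of $\ax(j)$ in $H$ acts cocompactly on $\ax(j)$, so $K$ must contain a loxodromic element $h$ sharing the axis of $j$. Then $\langle h, j\rangle$ is a discrete abelian subgroup of the pointwise stabilizer $\mathbb{R}\times \mathrm{SO}(2)$ of $\ax(j)$ in $\Isom^+(\mathbf{H}^3)$; discreteness forces the translation lengths of $h$ and $j$ to be rationally commensurable, so a suitable common power yields $j^n \in H$.

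When $\widetilde\ell$ is bi-infinite, I invoke Canary's Covering Theorem (Theorem \ref{thm:covering}) applied to $p : M \to N$: either $G$ is virtually fibered over $H$, which is one of the desired conclusions, or every geometrically infinite end $E$ of $M$ admits a neighbourhood on which $p$ is finite-to-one. I aim to rule out the second alternative by contradiction. The fixed point $p_+$ of $j$ lying in $\Lambda(H)$ cannot be a parabolic fixed point of $H$, because a parabolic element of $H \subset G$ sharing $p_+$ with the hyperbolic element $j$ would violate discreteness of $G$. Tameness of $H$ then forces $p_+$ to be a conical limit point, so the end of $\widetilde\ell$ heading to $p_+$ returns infinitely often to a fixed compact subset of $M^c$. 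Combining this recurrence with the finite-to-one property near every geometrically infinite end, one argues that the closed geodesic $\gamma$ has only finitely many lifts meeting any compact region of $M^c$; hence $\widetilde\ell$ must revisit the same lift at arbitrarily large parameter values, producing an element of $H$ that translates along $\ax(j)$, contradicting the assumption that $\widetilde\ell$ is bi-infinite.

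The delicate point is this last closing-up argument: translating recurrence of one end of $\widetilde\ell$, together with the finite-to-one covering behaviour near the geometrically infinite ends, into a bona fide deck transformation of $H$ preserving $\ax(j)$. It requires a careful end-by-end analysis of $M^c$ and of how the single closed geodesic $\gamma$ lifts to $M$, and is the technical heart of Anderson's argument in \cite{aj2}.
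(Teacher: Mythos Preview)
The paper does not prove Proposition~\ref{prop:an}; it is quoted verbatim as Theorem~A of Anderson \cite{aj2} and used as a black box (together with Theorem~\ref{thm:an}) in the proof of Lemma~\ref{lem:tame}. There is therefore no ``paper's own proof'' to compare your attempt against.

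As for the sketch itself: the overall architecture---reduce to the geometrically infinite case, split on whether $\ax(j)$ descends to a closed geodesic in $\mathbf{H}^3/H$, and in the non-closed case feed the conical-limit-point recurrence of one end of $\widetilde\ell$ into Canary's Covering Theorem---is the right shape and is indeed close to Anderson's line of argument. Your treatment of the closed case is essentially complete (note that $\langle h,j\rangle$ sits in the abelian group $\mathbb{R}\times S^1$, whose discrete subgroups have cyclic image in $\mathbb{R}$, so the translation lengths are commensurable; a residual finite-order rotation is then killed by a further power). In the bi-infinite case you correctly identify the crux and candidly defer it: turning ``$\widetilde\ell$ recurs to a compact set'' plus ``$p$ is finite-to-one near each geometrically infinite end'' into an element of $H$ translating along $\ax(j)$ is exactly the substantive content of \cite[Theorem~A]{aj2}, and your paragraph does not supply it. One technical point you skate over is that Theorem~\ref{thm:covering} as stated requires $G$ torsion free, so a Selberg step (and the compatibility of the conclusion with passage to finite index) should be inserted before invoking it.
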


Similarly, we obtain the following lemma.
\begin{lem} \label{lem:tame}
Let $G_1$ and $G_2$ be two non-elementary finitely generated subgroups of
an infinite co-volume Kleinian group $G \subset \Isom(\mathbf{H}^3)$.
Then $\Lambda(G_1) = \Lambda(G_2)$ if and only if $\Ax(G_1) = \Ax(G_2)
= \Ax(G_1 \cap G_2)$
\end{lem}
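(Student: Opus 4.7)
The plan is to follow the strategy of Lemma~\ref{lem:geo}, with Proposition~\ref{prop:an} playing the role of Proposition~\ref{prop:ss}, and with the infinite co-volume hypothesis on $G$ used to rule out the new ``virtually fibered'' alternative appearing in Proposition~\ref{prop:an}. One subtlety compared with the geometrically finite case is that $G_1 \cap G_2$ need not be finitely generated, so rather than applying Proposition~\ref{prop:an} with $H = G_1 \cap G_2$, I will apply it with $H$ equal to one of the $G_i$ themselves.

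For the non-trivial direction, assume $\Lambda(G_1) = \Lambda(G_2)$, and let $h$ be an arbitrary hyperbolic element of $G_2$. Its two fixed points lie in $\Lambda(G_2) = \Lambda(G_1)$. Since $G_1$ is finitely generated and $\langle G_1, h\rangle \subset G$ is discrete, Proposition~\ref{prop:an} applied with $H = G_1$ and $j = h$ yields the dichotomy: either $\langle G_1, h\rangle$ is virtually fibered over $G_1$, or $h^n \in G_1$ for some $n > 0$.

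The main obstacle is to exclude the virtual fibering alternative, and this is exactly where the infinite co-volume assumption on $G$ is used. Indeed, if $\langle G_1, h\rangle$ were virtually fibered over $G_1$, then by definition some finite-index subgroup $K$ of $\langle G_1, h\rangle$ would satisfy that $\mathbf{H}^3/K$ has finite volume. But $K \subset G$ produces a covering $\mathbf{H}^3/K \to \mathbf{H}^3/G$, so $\mathbf{H}^3/G$ would have finite volume as well, contradicting the hypothesis. Hence $h^n \in G_1$, and therefore $h^n \in G_1 \cap G_2$, which gives $\ax(h) \in \Ax(G_1 \cap G_2)$. Ranging over all hyperbolic $h \in G_2$ yields $\Ax(G_2) \subset \Ax(G_1 \cap G_2)$; the reverse inclusion is immediate from $G_1 \cap G_2 \subset G_2$. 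The symmetric argument with the roles of $G_1$ and $G_2$ interchanged then gives $\Ax(G_1) = \Ax(G_2) = \Ax(G_1 \cap G_2)$.

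The converse direction is standard: the fixed points of hyperbolic elements of a non-elementary Kleinian group are dense in its limit set, so equality of the axes sets $\Ax(G_1) = \Ax(G_2)$ forces equality of the limit sets $\Lambda(G_1) = \Lambda(G_2)$ upon taking closures of the corresponding fixed-point sets.
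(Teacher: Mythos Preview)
Your proof is correct, and in fact slightly more economical than the paper's. The paper follows the template of Lemma~\ref{lem:geo} more literally: it first invokes Anderson's result \cite{aj3} that $G_1\cap G_2$ is finitely generated, then uses Theorem~\ref{thm:an} to place the fixed points of a hyperbolic $h\in G_i$ inside $\Lambda(G_1\cap G_2)$, and finally applies Proposition~\ref{prop:an} with $H=G_1\cap G_2$. You instead apply Proposition~\ref{prop:an} directly with $H=G_1$ (resp.\ $G_2$), which bypasses both Theorem~\ref{thm:an} and the finite-generation result from \cite{aj3}. The trade-off is that the paper's version stays strictly parallel to the geometrically finite argument, while yours is shorter and needs fewer external inputs. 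One small correction to your commentary: your aside that ``$G_1\cap G_2$ need not be finitely generated'' is in fact false in this setting by \cite{aj3}; this does not affect your argument, which never uses that claim, but you should not state it as a genuine obstruction.
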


\begin{proof}
Observe that our hypothesis `$G$ is an infinite co-volume Kleinian
group' implies that $G$ is not virtually fibered over $G_1$. Since
the intersection of any pair of finitely generated subgroups of a
Kleinian group is finitely generated (see \cite{aj3}), we see that
$G_1\cap{G_2}$ is finitely generated. Using Theorem \ref{thm:an}
and Proposition \ref{prop:an}, we can argue exactly as in Lemma
\ref{lem:geo} to obtain $\Ax(G_1) = \Ax(G_2) = \Ax(G_1\cap{G_2})$, if
we suppose $\Lambda(G_1) = \Lambda({G_2})$.
\end{proof}
\begin{rem}
The condition of $G$ being infinite co-volume cannot be dropped, as
will be seen in the proof of Theorem \ref{thm:axes}. Namely, the
Kleinian group $G$ fibering over $H$ has a different axes set from that
of its fiber subgroup $H$.
\end{rem}

In the following two lemmas, we give some useful properties about
the same axes sets of two Kleinian groups.

\begin{lem} \label{lem:finite}
Let $G$ be a non-elementary finitely generated Kleinian group and
$H$ a subgroup of finite index in $G$. Then $\Ax(G) = \Ax(H)$.
\end{lem}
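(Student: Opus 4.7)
The plan is to prove the two inclusions separately. The inclusion $\Ax(H) \subseteq \Ax(G)$ is immediate, since any hyperbolic element of $H$ is in particular a hyperbolic element of $G$ with the same axis.

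For the reverse inclusion $\Ax(G) \subseteq \Ax(H)$, I would start with an arbitrary hyperbolic element $g \in G$ and aim to produce a positive power of $g$ that lies in $H$. The key observation is that a nontrivial power of a hyperbolic element is again hyperbolic with the same fixed points, hence the same axis; so $\ax(g^k) = \ax(g)$ for all $k \neq 0$. Thus it suffices to show that some $g^k$ with $k \geq 1$ belongs to $H$.

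To produce such a $k$, I would use the finite index hypothesis via a pigeonhole argument on the cosets of $H$ in $G$. Let $n = [G:H] < \infty$ and consider the $n+1$ right cosets $Hg, Hg^2, \ldots, Hg^{n+1}$. Two of them must coincide, say $Hg^i = Hg^j$ with $1 \le i < j \le n+1$, which yields $g^{j-i} \in H$ with $k := j-i \geq 1$. Then $\ax(g) = \ax(g^k) \in \Ax(H)$, completing the proof.

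I do not anticipate any real obstacle: the argument is a standard elementary group-theoretic pigeonhole combined with the fact that powers of hyperbolic isometries preserve the axis. The assumptions that $G$ is non-elementary and finitely generated are not actually needed for this lemma, but they match the hypotheses used in the applications later in the paper.
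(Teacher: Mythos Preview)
Your proof is correct and is essentially identical to the paper's: the paper also observes $\Ax(H)\subset\Ax(G)$ trivially and, for the reverse inclusion, uses the pigeonhole on cosets (written as $g^iH=g^jH$) to find a nontrivial power $g^{i-j}\in H$ with the same axis. Your remark that the non-elementary and finitely generated hypotheses are not actually used is also accurate.
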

\begin{proof}
It is obvious that $\Ax(H) \subset \Ax(G)$. Conversely, since $[G:H]$
is finite, for any hyperbolic element $g$ with axis $\ax(g) \in
\Ax(G)$, there are two integers $i$ and $j$ such that $g^i H= g^j H$
and thus $g^{i-j} \in H$. It follows that $\ax(g) \in \Ax(H)$. The
proof is complete.
\end{proof}

\begin{rem}
In fact, our Theorem \ref{thm:axes} proves that the converse of the above
Lemma is also true when $H$ is a finitely generated subgroup of $G \subset
\Isom(\mathbf{H}^3)$.
\end{rem}

\begin{lem} \label{lem:burnside}
Let G be a non-elementary finitely generated, torsion free Kleinian
group and H be a subgroup of G. Suppose that $\Ax(G) = \Ax(H)$. Then
for every hyperbolic element $g \in G$, $g^n \in H$ for some $n>0$.
\end{lem}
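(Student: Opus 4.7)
The plan is to prove that $g$ and some hyperbolic element of $H$ with the same axis both lie in a common infinite cyclic subgroup of $G$, from which a common power is automatic. Fix a hyperbolic $g \in G$ and let $L = \ax(g)$ with boundary endpoints $p, q \in \mathbf{S}^2$. The hypothesis $\Ax(G) = \Ax(H)$ furnishes a hyperbolic $h \in H$ with $\ax(h) = L$, so both $g$ and $h$ fix $p$ and $q$ individually as their attracting/repelling boundary fixed points.

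The central step is to analyse the pointwise stabilizer $S = \{\gamma \in G : \gamma(p) = p,\ \gamma(q) = q\}$ of $\{p,q\}$ in $G$. The corresponding pointwise stabilizer of $\{p,q\}$ in $\Isom^+(\mathbf{H}^3)$ is abelian and isomorphic to $\mathbf{C}^* \cong \mathbf{R} \times S^1$, where the $\mathbf{R}$-factor records hyperbolic translation along $L$ and the $S^1$-factor records rotation about $L$. A standard argument via the universal cover $\mathbf{R}^2 \to \mathbf{R} \times S^1$ and the classification of discrete subgroups of $\mathbf{R}^2$ shows that every discrete subgroup of $\mathbf{R} \times S^1$ is cyclic, so $S \cap \Isom^+(\mathbf{H}^3)$ is cyclic; it is nontrivial because it contains $g^2 \neq \mathrm{id}$. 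Therefore $S$ is virtually infinite cyclic, and being torsion-free (as $G$ is), $S$ itself must be infinite cyclic, say $S = \langle c \rangle$.

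Writing $g = c^i$ and $h = c^j$ for nonzero integers $i, j$, I obtain $g^j = c^{ij} = h^i \in H$. Taking $n = |j|$ (and inverting if $j < 0$) then gives $g^n \in H$, as required. The main conceptual point is the cyclicity of $S$; the only mild subtlety is that $G$ may a priori contain orientation-reversing hyperbolic elements, but this is absorbed cleanly by the elementary fact that a torsion-free virtually cyclic group is cyclic, so no separate squaring of $g$ or $h$ is needed in the final step.
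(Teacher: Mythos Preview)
Your argument is correct and follows essentially the same route as the paper: pick $h\in H$ with $\ax(h)=\ax(g)$, show that $g$ and $h$ lie in a common infinite cyclic subgroup (the paper simply cites the classification of torsion-free elementary Kleinian groups for $\langle g,h\rangle$, whereas you unpack this in dimension~$3$ via the pointwise stabilizer $\cong \mathbf{R}\times S^1$), and then read off $g^{j}=h^{i}\in H$. One small inaccuracy: it is not true that \emph{every} discrete subgroup of $\mathbf{R}\times S^1$ is cyclic (e.g.\ $\mathbf{Z}\times\mathbf{Z}/2$), but your conclusion for $S\cap\Isom^+(\mathbf{H}^3)$ is still valid because this group is torsion-free, a hypothesis you invoke anyway in the next line.
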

\begin{proof}
For any hyperbolic element $g \in G$, we can choose a hyperbolic
element $h$ from $H$ such that $\ax(g)=\ax(h)$ by the hypothesis
$\Ax(G) = \Ax(H)$. It follows that the subgroup $\langle g,h\rangle$ is
elementary and torsion free. By the characterization of
elementary Kleinian groups it follows that $\langle g,h\rangle$ is actually a cyclic
subgroup $\langle f\rangle$ of $G$. Thus we can write $g=f^m$ and $h=f^n$ for two
appropriate integers $m$, $n$. Now we have found the integer $n$
such that $g^n = h^m \in H$, which proves the lemma.
\end{proof}

\section{Proofs}\label{sec:3}

\begin{proof}[Proof of Proposition \ref{prop:geocomm}]
Recall $G_1$ and $G_2$ are commensurable if the intersection $G_1
\cap G_2$ is of finite index in both $G_1$ and $G_2$. By Lemma
\ref{lem:finite}, we have $\Ax(G_1)=\Ax(G_2)$ and thus
$\Lambda(G_1)=\Lambda(G_2)$, if $G_1$ and $G_2$ are commensurable.
So it remains to prove the converse.

If $\Lambda(G_1)=\Lambda(G_2)$,  we have $\Ax(G_1) = \Ax(G_2) = \Ax(G_1
\cap G_2)$ by Lemma \ref{lem:geo}. Therefore it follows that
$\Lambda(G_1) = \Lambda(G_2) = \Lambda(G_1\cap{G_2})$, since the set
of fixed points of hyperbolic elements of $G$ is dense in
$\Lambda(G)$. Now we can conclude that $G_1\cap{G_2}$ is of finite
index in both $G_1$ and $G_2$, by using Theorem 1 in \cite{ss} which
states that any geometrically finite subgroup $H$ of a Kleinian
group $G$ is of finite index in $G$ if $\Lambda(H)=\Lambda(G)$.

The second assertion is just Lemma \ref{lem:geo}. This completes
the proof.
\end{proof}

\begin{proof}[Proof of Corollary \ref{cor:geocomm}]
It is well known that the stabilizer in $\Isom(\mathbf{H}^n)$ of the
limit set of a non-elementary Kleinian group $G$ of the second kind
leaving no $m$ hyperbolic planes invariant for $m < n-1$, is itself a
Kleinian group. See for example Greenberg \cite{lg}, where the discreteness
of the stabilizer of that limit set is proved.

Thus $\Ax(G_1) = \Ax(G_2)$ implies that $G_1$ and $G_2$ together lie
in a common Kleinian group, which is the stabilizer of the common
limit set of $G_1$ and $G_2$. Thus Proposition \ref{prop:geocomm} completes the proof.
\end{proof}

\begin{proof}[Proof of Theorem \ref{thm:tamecomm}]
Observe that the fundamental domain of the subgroup $H$ is the union of
translates of the fundamental domain of $G$ by left $H$-coset representatives in
$G$. So the subgroup $\langle G_1,G_2\rangle$ also has infinite co-volume, and we
can assume that $G$ is finitely generated by replacing $G$ by
$\langle G_1,G_2\rangle$. As the conclusion is easily seen to be unaffected by
passage to a finite index subgroup, we may use Selberg's Lemma to
pass to a finite index, torsion free subgroup of $G$. Hence, without loss of
generality, we may assume that $G$ is finitely generated and torsion free.

By Proposition \ref{prop:geocomm}, the conclusion is trivial if $G$ is
geometrically finite. So we suppose that $G$ is geometrically infinite. Then
$M = \textbf{H}^{3}/G$ has infinite volume. Let $C$ be a compact
core for $M$. Since $M$ has infinite volume, $\partial{C}$ contains
a surface of genus at least two. Then using Thurston's
geometrization theorem for Haken three-manifolds (see \cite{jm}),
there exists a geometrically finite Kleinian group with non-empty
discontinuity domain, which is isomorphic to $G$.

Now our task is to give an algebraic characterization of the limit
set of $G_1\cap{G_2}$ in $G_i$ such that the relationships between
$\Lambda(G_1\cap{G_2})$ and $\Lambda(G_i)$ can be passed to the ones
between target isomorphic groups under the above isomorphism of $G$.
Then the conclusion of Theorem \ref{thm:tamecomm} follows from
Proposition \ref{prop:geocomm}. We claim that for every element $g
\in G_1$, there exists an integer $k$ such that $g^k \in
G_1\cap{G_2}$.

Firstly, by Lemma \ref{lem:tame}, we obtain that $\Ax(G_1) = \Ax(G_2)
= \Ax(G_1 \cap G_2)$. So for any hyperbolic element $g \in G_1$, the
integer $k$ obtained in Lemma \ref{lem:burnside} is such that
$g^k \in G_1\cap{G_2}$. Now we consider the remaining parabolic
elements. Theorem B of \cite{aj2} says that if no nontrivial power of a
parabolic element $h \in G_1$ lies in $G_1\cap{G_2}$, then there
exists a doubly cusped parabolic element $f \in G_1\cap{G_2}$ with
the same fixed point $\xi$ as $h$. Normalizing their fixed point
$\xi$ to $\infty$, we can suppose that $f(z)=z+1$ and $h(z)=z+\tau$,
where $\text{Im}(\tau) \neq 0$. Since $f$ is doubly cusped in
$G_1\cap{G_2}$, then $\Lambda(G_1\cap{G_2}) \subset \{z:|\text{Im}(z)| <
c\}$, for some constant $c$. But on the other hand,
$\Lambda(G_1\cap{G_2})$ is also kept invariant under $h$, which
contradicts the fact that $\Lambda(G_1\cap{G_2})$ is invariant
under $f$. Therefore the claim is proved for all elements including
parabolic elements. A similar claim holds for $G_1\cap{G_2}$ and
$G_2$.

Under the isomorphism, using the above claims, we can conclude that the
limit set of the (isomorphic) image of $G_1 \cap G_2$ is equal to those
of the (isomorphic) images of $G_1$ and ${G_2}$. The proof is
complete as a consequence of Proposition \ref{prop:geocomm}.
\end{proof}

\begin{rem}
Theorem \ref{thm:tamecomm} can be thought as a geometric version of
Lemma 5.4 in \cite{aj2}, which uses an algebraic assumption on the limit
sets of the groups involved.
\end{rem}

\begin{proof}[Proof of Corollary \ref{cor:tamecomm}]
This is proved similarly to Corollary \ref{cor:geocomm}.
\end{proof}

\begin{proof}[Proof of Theorem \ref{thm:axes}]
In view of Lemma \ref{lem:finite}, we may assume, without loss of
generality, that $H$ is finitely generated and torsion free by using
Selberg's Lemma to pass to a finite index, torsion free subgroup of
$H$.

If $H$ is geometrically finite, then the conclusion follows from a
result of P. Susskind and G. Swarup \cite{ss}, which states that a
non-elementary geometrically finite subgroup sharing the same limit
set with the ambient discrete group is of finite index. So next we
suppose that $H$ is geometrically infinite. Then there exist
finitely many geometrically infinite ends $E_i$ for the manifold $N
:= \textbf{H}^{3}/H$.

We first claim that $\Ax(H) = \Ax(G)$ implies that $H$ cannot be a
virtually fiber subgroup of $G$. Otherwise, by taking finite index
subgroups of $G$ and $H$, we can suppose $H$ is normal in $G$. Then
it follows that every element of the quotient group $G/H$ has finite
order by Lemma \ref{lem:burnside}. Thus $G/H$ could not be
isomorphic to $\textbf{Z}$. This is a contradiction. So $H$ is not a
virtually fiber subgroup of $G$.

Using the Covering Theorem \ref{thm:covering}, we know that for
each geometrically infinite end $E_i$, there exists a neighbourhood
$U_i$ of $E_i$ such that the covering map $\mathcal {P}$: $N
\rightarrow M: = \textbf{H}^{3}/G$ is finite to one on $U_i$.

Now we argue by way of contradiction. Let $\mathcal {Q}_N :
\textbf{H}^{3} \rightarrow N $ and let $\mathcal {Q}_M :
\textbf{H}^{3} \rightarrow M$ be the covering maps and notice that
$\mathcal {Q}_M=\mathcal {P} \circ \mathcal {Q}_N$. Suppose that $[G:H]$
is infinite. This implies that $\mathcal {P}$ is an infinite
covering map. By the definition of a geometrically infinite end, we
can take a point $z$ from the neighbourhood $U_1$ of a geometrically
infinite end $E_1$ such that $z$ also lies in the convex core of
$N$. By lifting the point $\mathcal {P}(z) \in M$ to
$\textbf{H}^{3}$, it is easy to see that the infinite set
$\widetilde{S}: = \mathcal {Q}_M^{-1}(\mathcal {P}(z))$ lies in the
common convex hull $\widetilde{C}(H) = \widetilde{C}(G)\subset
\textbf{H}^{3}$, by observing that $\widetilde{C}(G)$ is invariant
under $G$ and the preimage $\mathcal {Q}_N^{-1}(z) \subset
\widetilde{S}$ lies in $\widetilde{C}(H)$. Since $\mathcal {P}$ is
an infinite covering map, the set $S: = \mathcal
{P}^{-1}(\mathcal {P}(z))$ is infinite.  By considering $\mathcal
{Q}_M=\mathcal {P} \circ \mathcal {Q}_N$, it follows that $S =
\mathcal {Q}_N(\widetilde{S}) \subset N$ and thus $S$ lies in the
convex core of $N$, because $\widetilde{S} \subset
\widetilde{C}(H)$.

We claim that we can take a smaller invariant horoball system for
$H$ such that infinitely many points of $S$ lie outside all cusp
ends of $N$. Otherwise, we can suppose that infinitely many points
of $S$ are contained inside a cusp end $E_c$ of $N$, since there are
only finitely many cusp ends for $N$. Thus infinitely many points of
$\mathcal {Q}_N^{-1}(S)$ lie in the corresponding horoball $B$ for
the end $E_c$. Normalizing the parabolic fixed point for $E_c$ to
$\infty$ in the upper half space model of $\textbf{H}^{3}$, the
horoball $B$ at $\infty$ is precisely invariant under the stabilizer
of $\infty$ in $H$. On the other hand, we have that infinitely many
points of $\mathcal {Q}_N^{-1}(S)$ have the same height, since the
covering map $\mathcal {P}$ maps $S \subset N$ to a single point on
$M$, and the horoball $B$ is also precisely invariant under the
stabilizer of $\infty$ in $G$, which is a Euclidean group preserving
the height of points in the horoball $B$. Then we can take a smaller
horoball for $E_c$ such that these infinitely many points of
$\mathcal {Q}_N^{-1}(S)$ lie outside the horoball.

Continuing the above process for all cusp ends of $N$, we can get a new
invariant horoball system such that infinitely points of $S$ lie
outside these cusp ends of $N$.

Since $S$ projects to a single point on $M$, we can conclude that
$S$ cannot lie in any compact subset of the convex core of $N$. Thus
by the above second claim, there exist infinitely many points of $S$
which can only lie in geometrically infinite ends of $N$. This is a
contradiction to the Covering Theorem \ref{thm:covering},  which
states that the covering map $\mathcal {P}$ restricted to each
geometrically infinite end is finite-to-one, if $H$ is not a
virtually fiber subgroup of $G$.
\end{proof}

\acks The first author would like to sincerely thank Prof. Jim
Anderson for helpful communications and Prof. Leonid Potyagailo for
many helpful comments and discussions about the original version.
The authors would like to express their gratitude to the referee for
his/her many helpful comments, which substantially improved the
exposition of the paper.




\begin{thebibliography}{99}

\bibitem{ag} I. Agol, Tameness of hyperbolic 3-manifolds, preprint.

\bibitem{aj3} J. Anderson, On the finitely generated intersection property for Kleinian groups,
Complex Variables Theory Appl. 17 (1991), 111--112.

\bibitem{aj2} J. Anderson, Intersections of topologically tame subgroups of Kleinian groups,
J. Anal. Math. 65 (1995), 77--94.

\bibitem{aj} J. Anderson, The limit set intersection theorem for finitely generated Kleinian groups,
Math. Res. Lett. 3 (1996), 675--692.


\bibitem{bm} M. Bestvina, Questions in geometric group Theory, \url{http://www.math.utah.edu/~bestvina}.

\bibitem{cag} D. Calegari and D. Gabai, Shrinkwrapping and the taming of hyperbolic 3-manifolds,
J. Amer. Math. Soc. 19 (2006), 385--446.

\bibitem{ca}  R. Canary, A covering theorem for hyperbolic 3-manifolds and its applications, Topology, 35(1996), 751--778.

\bibitem{lg} L. Greenberg, Commensurable groups of Moebius transformations, in: Discontinuous groups and Riemann surfaces, ed. by L. Greenberg, Annals of Mathematics Studies 79, Princeton University Press, Princeton, 1974, 227--237.

\bibitem{da} D. Long and A. Reid, On Fuchsian groups with the same set of axes,
Bull. London Math. Soc. 30 (1998), 533--538.


\bibitem{mc} D. McCullough, Compact submanifolds of 3-manifolds with boundary, Quart. J. Math. Oxford 37 (1986), 299--307.

\bibitem{gm} G. Mess, Fuchsian groups with the same simple axes, Preprint (1990).

\bibitem{jm} J. Morgan, On Thurston's uniformization theorem for 3-dimensional manifolds, in The Smith Conjecture, ed. by H. Bass and J. Morgan, Academic Press, 1984, pp. 37--125.

\bibitem{ps} P. Susskind, An infinitely generated intersection of the hyperbolic group, Proc. Amer. Math. Soc. 129 (2001), 2643--2646.

\bibitem{ss} P. Susskind and G. Swarup,  Limit sets of geometrically finite hyperbolic groups, Amer. J. Math. 114 (1992), 233--250.

\bibitem{thurston} W. Thurston, The geometry and topology of 3-manifolds, Lecture Notes, Princeton Mathematics Department.

\end{thebibliography}




\end{document}